\documentclass{article}
\usepackage{hyperref,xurl,cite}

\usepackage[utf8]{inputenc}
\usepackage{amsmath,amsthm,fullpage,graphicx}
\usepackage{hyperref}
\usepackage{listings}
\usepackage{authblk}
\usepackage{xurl}
\newtheorem{theorem}{Theorem}
\newtheorem{lemma}{Lemma}
\newtheorem{corollary}{Corollary}

\newtheorem{conjecture}{Conjecture}

\newcommand{\oeis}[1]{\href{https://oeis.org/#1}{#1}}

\title{Computing the Greatest Common Divisor of Binomial Coefficients $\binom{mn}{mk}$}
\author{Chai Wah Wu}
\affil{IBM Research\\IBM T. J. Watson Research Center, Yorktown Heights, NY, USA\thanks{cwwu@us.ibm.com}}

\date{June 18, 2026\\Latest revision: August 4, 2026}

\begin{document}

\maketitle

\begin{abstract}
The greatest common divisor (GCD) of  $\binom{2n}{2k}$ for $1\leq k<n$ is known to be some power of $2$ times the product of all odd primes $p$ such that $2n=p^i+p^j$. 
We complete the analysis of this GCD by showing that this power of $2$ is either $1$ or $0$ and relates it to Mersenne primes.
We also show how to efficiently compute $\text{GCD}\left\{\binom{mn}{mk}: 1\leq k<n\right\}$ when $n$ and $m$ satisfy certain conditions.
\end{abstract}

\vspace{10px}

\noindent{\textbf{Keywords:}{ number theory, binomial coefficients, greatest common divisor, Mersenne exponent, Mersenne prime, Mih\u{a}ilescu's theorem}} 

\vspace{10px}

\section{Greatest common divisor of binomial coefficients}

The greatest common divisor (GCD) of sets of binomial coefficients has been widely studied over the years \cite{Mendelsohn:1971,Albree1972,Gould1972,Straus1973,Joris1985,Hong2016,Xiao:2022,Chung:2025}, beginning with the study by Ram \cite{ram:1909}. 
It was shown in \cite{ram:1909} that the GCD of $\left\{\binom{n}{k}: 1\leq k<n\right\}$ for $n>1$ is equal to $p$ if $n$ is a power of a prime $p$ and is equal to $1$ otherwise. 

More generally, for $n>1$ let us define $g(m,n)$
as:
$$ g(m,n) = \text{GCD}\left\{\binom{mn}{mk}: 1\leq k<n\right\}$$ 
Thus $g(1,n)=p$ if $n$ is a power of prime $p$ and $g(1,n)=1$ otherwise. It is clear that $g(m,n)$ is a divisor of $\binom{mn}{m}$.

For $p$ prime, let $\text{ord}_p(m)$ denote the $p$-adic valuation of $m$, i.e. the largest integer $k$ such that $p^k$ divides $m$. 
Let $\alpha_m(n)$ denote the digit sum of $n$ written in base $m$.

In \cite{McTague2014} it was shown that $g(2,n)$ is equal to some powers of $2$ times the product of odd primes $p$ such that $2n=p^i+p^j$.
In particular, it was shown that for $p$ an odd prime, 
\begin{equation}
\text{ord}_p(g(2,n)) = \left\{\begin{array}{ll} 1&\text{if } 2n = p^i+p^j\\
												0&\text{otherwise}
							\end{array}
\right. \label{eqn:Q}
\end{equation}
The equation $2n = p^i+p^j$ means that $2n$ is the sum of $2$ powers of $p$. 
For $p=2$, this means that the digit sum of $2n$ in binary representation is either $1$ or $2$, i.e. $1\leq \alpha_2(2n)\leq 2$.
For $p>2$, this means that the digit sum of $2n$ in base $p$ representation is $2$, i.e. $\alpha_p(2n) = 2$.
In \cite{McTague2017} Eq. (\ref{eqn:Q}) was extended to:
\begin{equation*}
\text{ord}_p(g(m,n)) = \left\{\begin{array}{ll} 1&\text{if }\alpha_p(mn)=m\\
												0&\text{otherwise}
							\end{array}
\right.
\end{equation*}
for prime $p$ where $p \equiv 1 \pmod{m}$. 
Note that the case of $m=1$ described above is a special case of this result as all primes $p$ satisfy $p\equiv 1 \pmod{1}$ and $\alpha_p(n) = 1$ if and only if $n$ is a power of $p$.

Recall that Kummer's theorem \cite{kummer:1852} states that $\text{ord}_p(\binom{n}{a})$ is equal to the number of carries when adding $a$ to $n-a$ in prime base $p$.
For $m$ a prime power\footnote{$m$ is a prime power if $m=p^t$ for some prime $p$ and integer $t\geq 0$.}, we have the following result:

\begin{theorem}
If $n>1$ and $m=p^t$ is a prime power for some prime $p$ and $t\geq 0$, then
$\text{ord}_p(g(m,n)) = 1$ if $n$ is a power of $p$ and $\text{ord}_p(g(m,n)) = 0$ otherwise.
\end{theorem}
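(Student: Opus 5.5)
Both directions follow from Kummer's theorem in base $p$, once we note how multiplication by $m=p^t$ acts on base-$p$ expansions. Multiplying an integer by $p^t$ appends $t$ zero digits. So adding $mk$ to $m(n-k)$ in base $p$ produces exactly the same carries as adding $k$ to $n-k$, just shifted $t$ places to the left. Hence, by Kummer's theorem,
$$\text{ord}_p\binom{mn}{mk} = \text{ord}_p\binom{n}{k}\qquad\text{for all } 0\le k\le n,$$
and therefore
$$\text{ord}_p(g(m,n)) = \min_{1\le k<n}\text{ord}_p\binom{n}{k} = \text{ord}_p(g(1,n)).$$
This identity also covers $t=0$, where it is trivial. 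It remains to compute $\text{ord}_p(g(1,n))$. Ram's result does this immediately: $g(1,n)=p$ when $n$ is a power of $p$, and otherwise $g(1,n)$ is either $1$ or a different prime $q$, so its $p$-adic valuation is $0$. That already proves the theorem.

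To keep the argument self-contained, I would also verify the two cases directly with Kummer's theorem.

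\emph{Case $n=p^s$, $s\ge1$.} For $1\le k<n$, the sum $k+(n-k)=p^s$ produces at least one carry, so each $\binom{n}{k}$ is divisible by $p$. The choice $k=p^{s-1}$ gives exactly one carry: the digit $1$ at position $s-1$ is added to the digit $p-1$ at the same position, with zeros elsewhere in $k$. So the minimum valuation is exactly $1$.

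\emph{Case $n$ not a power of $p$.} Let $p^s$ be the highest power of $p$ with $p^s\le n$, and take $k=p^s$. Then $1\le k<n$, because $n>1$ and $n\ne p^s$. Subtracting $k$ from $n$ only lowers the leading digit of $n$ by one, so $n-k$ has no borrows. Equivalently, $k+(n-k)$ has no carries, and hence $p\nmid\binom{n}{k}$, giving valuation $0$.

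The only point needing care is the first step: that Kummer's carry count is unchanged by appending $t$ zeros to both summands. This holds because the appended lower digits are all $0$, so no carry ever originates in the low $t$ positions.
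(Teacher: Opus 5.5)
Your proof is correct and follows the same route as the paper. Both use Kummer's theorem, note that multiplying both summands by $p^t$ leaves the carry count unchanged, take the witness $k=p^{s-1}$ with exactly one carry when $n=p^s$, and find a carry-free witness when $n$ is not a power of $p$. The only differences are small improvements: your witness $k=p^s$ (the largest power of $p$ not exceeding $n$) handles the non-power case in one step where the paper splits it into two subcases with $k=rp^j$, and your identity $\text{ord}_p(g(m,n))=\text{ord}_p(g(1,n))$ lets Ram's theorem finish the argument directly.
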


\begin{proof}
First note that $\text{ord}_p(g(m,n)) = \min_{1\leq k <n} \text{ord}_p(\binom{mn}{mk})$. Next, the number of carries when adding $k$ to $n-k$ in base $p$ is the same as when adding $mk$ to $m(n-k)$. If $n$ is not a power of $p$, then either $n=(r+1)p^j$ for some $0<r<p-1$ or there exists $j$ such that the $j$-th digit of $n$ in base $p$ is $0<r<p$ with $rp^j<n$. Then by setting $k = rp^j < n$, there will be no carries when adding $k$ to $n-k$ and by Kummer's theorem $\text{ord}_p(\binom{mn}{mk}) = 0$ and thus $\text{ord}_p(g(m,n))=0$.
If $n=p^i$, then $i>0$ since $n>1$. Furthermore, $n$ has $i+1$ digits in base $p$. For each $1\leq k<n$, both $k$ and $n-k$ will have less than $i+1$ digits and thus there will be a carry adding $k$ to $n-k$ in base $p$ and thus $\text{ord}_p(\binom{mn}{mk})>0$, i.e $\text{ord}_p(g(m,n))>0$. 
For $k = n/p = p^{i-1}$, 
$n-k = p^{i-1}(p-1)$, $k = p^{i-1}$ and adding $(p-1)p^{i-1}$ to $p^{i-1}$ results in exactly one carry.
As a consequence, Kummer's theorem shows that $\text{ord}_p(\binom{mn}{mk})=1$ and thus $\text{ord}_p(g(m,n))=1$.
\end{proof}

\begin{lemma}
Let $n>1$ and $m<q$ be such that $i$ and $q$ are coprime for all $1\leq i\leq m$. If $\alpha_q(mn)=m$, then $q$ divides  $\binom{mn}{m}$.
\end{lemma}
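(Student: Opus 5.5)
The plan is to reduce the divisibility of $\binom{mn}{m}$ by $q$ to the divisibility of a product of $m$ consecutive integers by $q$. Then we use the lowest base-$q$ digit of $mn$ to locate a multiple of $q$ among those integers. Note that $q$ need not be prime. The hypothesis that $q$ is coprime to every $i\leq m$ says exactly that $\gcd(q,m!)=1$, and that is what lets us cancel $m!$.

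Write $N=mn$, so that
$$ m!\binom{N}{m} = N(N-1)\cdots(N-m+1). $$
Since $\gcd(q,m!)=1$, it suffices to show that $q$ divides the right-hand side. Then $q\mid m!\binom{N}{m}$ forces $q\mid\binom{N}{m}$. So it is enough to exhibit some $0\leq d\leq m-1$ with $q\mid N-d$.

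Let $d_0$ be the lowest digit of $N$ in base $q$, so $N\equiv d_0 \pmod q$. Since all digits are nonnegative and $\alpha_q(N)=m$, we get $d_0\leq m$. The key step is to rule out $d_0=m$. If $d_0=m$, then all the other digits sum to $0$, so all of them vanish. Since $m<q$, the single digit $m$ is a legitimate base-$q$ digit, and then $N=m$. This contradicts $N=mn>m$, which holds because $n>1$.

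Hence $0\leq d_0\leq m-1$. The integer $N-d_0$ is one of the factors $N,N-1,\dots,N-m+1$, and it is divisible by $q$, which gives the claim. The only delicate point is this exclusion of $d_0=m$, which uses $n>1$. The case $d_0=0$ needs no special treatment: there the factor $N$ itself is divisible by $q$.
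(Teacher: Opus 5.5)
Your proof is correct and is essentially the paper's argument. The paper writes $mn=\sum_{j=1}^m q^{i_j}$ with $i_1\le\cdots\le i_m$; the number of exponents equal to $0$ is exactly your lowest digit $d_0$, and $n>1$ forcing $i_m>0$ is your exclusion of $d_0=m$, after which $q\mid mn-d_0$ and coprimality with $1,\dots,m$ finish the proof. Your version is slightly cleaner in making the cancellation of $m!$ explicit (the paper's ``for $1\le i\le m$'' should read ``for some''), and your appeal to $m<q$ is not actually needed, since $d_0$ is already a base-$q$ digit.
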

\begin{proof}
If $\alpha_q(mn)=m$, then $mn = \sum_{j=1}^m q^{i_1}$ for $i_1\leq i_2 \cdots \leq i_m$. Since $n>1$, this implies $i_m>0$. This can be written as
$$ q^{i_1}\left(1+q^{i_2-i_1}\left(1+q^{i_3-i_2} + \cdots \left(1+q^{i_{m-1}-i_{m-2}}(1+q^{i_m-i_{m-1}})\right)\cdots \right)\right)$$
Since $i$ and $q$ are coprime for $1\leq i\leq m$, all we need to show is that $q$ divides $mn-i+1$ for $1\leq i\leq m$.
If $i_1>0$, then $q$ is a factor of $mn$.
If $i_j=0$ for $1\leq j< m$ and $i_{j+1}>0$, then
$mn = (1+1+\cdots+1+q^{i_{j+1}}(1+\cdots)) = j+q^{i_{j+1}}(1+\cdots)$,
i.e. $q$ divides $mn-j>mn-m$ and thus
$q$ divides $\binom{mn}{m}$.
\end{proof}

Combining these results we obtain the following characterization of $g(m,n)$ when $n$ and $m$ satisfy certain conditions:
\begin{theorem} \label{thm:one}
Suppose $n>1$, $m=p^t$ is a prime power for some prime $p$,
and all prime factors $q$ of $\binom{mn}{m}$ is either equal to $p$ or $q\equiv 1\pmod{m}$.
Then
\begin{equation*}
g(m,n) = \left\{\begin{array}{ll} p\ \Pi_{q\in T}\ q & \text{if $n$ is a power of $p$} \\
									   \Pi_{q\in T}\ q&\text{otherwise.}
							\end{array}
\right.
\end{equation*} 
where $T = \left\{q \text{ prime}:q\neq p, q|\binom{mn}{m}, \alpha_q(mn)=m\right\}$.
\end{theorem}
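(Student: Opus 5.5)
Since $g(m,n)$ divides $\binom{mn}{m}$ (take $k=1$), every prime factor of $g(m,n)$ is a prime factor of $\binom{mn}{m}$. The plan is therefore to compute $\text{ord}_q(g(m,n))$ separately for each prime $q$ dividing $\binom{mn}{m}$, and then multiply the resulting prime powers. By hypothesis each such $q$ is either $q=p$ or satisfies $q\equiv 1\pmod{m}$. No other prime can contribute.

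For $q=p$, the first theorem of this section applies directly, because $m=p^t$. It gives $\text{ord}_p(g(m,n))=1$ when $n$ is a power of $p$ and $\text{ord}_p(g(m,n))=0$ otherwise. This produces the extra factor $p$ in the first case of the formula and no factor of $p$ in the second. The definition of $T$ excludes $q=p$, so this prime is not counted twice.

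For a prime $q\neq p$ dividing $\binom{mn}{m}$, the hypothesis gives $q\equiv 1\pmod m$. The McTague result quoted above then yields $\text{ord}_q(g(m,n))=1$ if $\alpha_q(mn)=m$ and $\text{ord}_q(g(m,n))=0$ otherwise. So $q$ divides $g(m,n)$, and does so exactly once, precisely when $q\in T$. The condition $q\mid\binom{mn}{m}$ built into $T$ is harmless.

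One subtlety is the degenerate case $m=1$, i.e.\ $t=0$. Here ``$p$'' is not really determined, and every prime satisfies $q\equiv 1\pmod 1$. I would check that the formula still reads consistently in this case; it reduces to Ram's result, since $\alpha_q(n)=1$ iff $n$ is a power of $q$. I would also note that the preceding lemma is consistent with this picture: when $q>m$ is coprime to $1,\dots,m$ (which holds for $q\equiv 1\pmod m$, $q\neq p$ and $q>m$) and $\alpha_q(mn)=m$, the lemma gives $q\mid\binom{mn}{m}$ automatically. The theorem itself does not need the lemma, however.

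Assembling the valuations gives exactly the stated product. The main obstacle is only bookkeeping: confirming that every prime divisor of $g(m,n)$ is covered by one of the two cases, which is guaranteed by the hypothesis on the prime factors of $\binom{mn}{m}$ together with the fact that $g(m,n)\mid\binom{mn}{m}$.
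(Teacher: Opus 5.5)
Your proposal is correct and follows essentially the paper's route. The paper simply combines $g(m,n)\mid\binom{mn}{m}$ with the first theorem for the prime $p$ and McTague's result for the primes $q\equiv 1\pmod m$. You are also right that the lemma is not logically needed for the statement as written, since $T$ already carries the condition $q\mid\binom{mn}{m}$.
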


The conditions of Theorem \ref{thm:one} imply that
all prime factors of $\binom{mn}{m}$ is either $p$ or larger than $m$. A consequence of Theorem \ref{thm:one} is that $g(m,n)$ is squarefree for these values of $n$ and $m$.
Table \ref{table:mn} shows values of $m$ and $n$ for which the hypothesis of Theorem \ref{thm:one} is satisfied.

\begin{table}[] \begin{tabular}{|c||c|c|c|c|c|c|c|c|}
\hline $m$ & \multicolumn{8}{|c|}{$n$} \\ \hline \hline
2 & 2 & 3 & 4 & 5 & 6 & 7 & 8 & 9  \\ \hline
3 & 13 & 21 & 61 & 73 & 93 & 133 & 181 & 201  \\ \hline
4 & 73 & 85 & 145 & 337 & 505 & 577 & 733 & 829  \\ \hline
5 & 25 & 1681 & 13105 & 32761 & 53281 & 73441 & 107881 & 120121  \\ \hline
7 & 7393681 & 12043081 & 30401281 & 64144081 & 141641641 & 158911201 & 234067681 &  443298241 \\ \hline
7 & 458925481 & 474856201 & 513893521 & 523144441 & 709925041 & 743422681 &758557801 & 965825281\\ \hline
8 & 1199521 & 17907121 & 401929921 & 456029281 & 511711201 & 566624521 & 915248881 & 1014486481 \\ \hline
\end{tabular}\label{table:mn} 
\caption{Values of $m$ and $n>1$ such that $m = p^t$ is a prime power and all prime factors $q$ of $\binom{mn}{m}$ is either equal to $p$ or $q\equiv 1\pmod{m}$.} \end{table}

As an example of applying Theorem \ref{thm:one}, consider the case $m=3$ and $n=3081$. These values satisfy the conditions of Theorem \ref{thm:one} with $T = \{4621,9241\}$. Since $n$ is not a power of $3$, this implies that $g(3,3081) = 4621\times 9241 = 42702661$. 
For the case $m=4$ and $n=9805$, $T = \{19609, 39217\}$ and $g(4,9805) = 769006153 = 19609 \times 39217$.

Another example is when $m = 5$ and $n = 1118041$ with $$T = \{1397551, 1118041, 2795101, 1863401, 5590201\}.$$
Since $n$ is not a power of $5$, 
\begin{eqnarray*}
g(5,1118041) &=& 45494264542284581850166581839291\\& =& 1397551 \times 1118041\times  2795101\times 1863401\times 5590201
\end{eqnarray*}

Consider $m=5$ and $n = 5^{14} = 6103515625$. In this case 
$T = \{30517578121\}$ and $g(5,5^{14}) = 152587890605 = 5\times 30517578121$.
Similarly, 
\begin{eqnarray*}
g(5,5^{42}) &=& 5\\
g(7,7393681) &=& 197369722557758722635198993396022973319446956674973\\
g(7,12043081) &=& 5068726741059031030172567815491325723\\
g(7,30401281) & = &129005136421383583880464807953850269394414020673\\
g(7,64144081) &=& 5425572087152832027611634438764717957\\
g(7,141641641) &=& 188496545324707039013183201869286575130279554171327\\
g(7,158911201) &=& 2703473987321722596860806988486782327458481\\
g(7,234067681) &=& 14057826305110347970145929851258293244454873\\
g(7,443298241) &=& 177146978708442984644336902411216668588268283174081617\\
g(7,458925481) &=& 1900765546933904074761045273164441260453775833\\
g(7,474856201) &=& 1356432240166023395735850621772056781\\
g(7,513893521) &=& 1860553737921491194870761745173711769\\
g(7,523144441) &=& 1752294286224857420001914372640815035895790514124010764059533907\\
g(7,709925041) &=& 12628199460010050431385564408476474212457106997\\
g(7,743422681) &=& 23644154752880388633014783044596447155948468801515711543\\
g(7,758557801) &=& 26683391382342100791312182399057220031425000659329940971\\
g(7,965825281) &=& 34820514607017234356683965835408929793\\
g(7,4218497641) &=& 53460124075333631944665968335345497375198907272247\\
g(8,1014486481) &=& 226802041625350759102354064704943112794509972621273728769\\
g(8,2836033201) &=& 42896449676470790401
\end{eqnarray*}

\section[GCD of binomial coefficients $(2n,2k)$ for $1\leq k<n$]{GCD of $\binom{2n}{2k}$ for $1\leq k<n$}
For the case $m=2$, all primes $q\neq 2$ are odd and thus $q\equiv 1 \pmod{m}$ and reducing Theorem \ref{thm:one} to this case results in an explicit formula for $g(2,n)$ for all $n>1$:
\begin{theorem} \label{thm:m=2}
For $n>1$, $g(2,n)$ is given by
\begin{equation*}
g(2,n) = \left\{\begin{array}{ll} 2\ \Pi_{q\in T}\ q & \text{if $n$ is a power of $2$} \\
									   \Pi_{q\in T}\ q&\text{otherwise.}
				\end{array}
\right.
\end{equation*} 
where $T$ is the set of odd prime factors $q$ of $n(2n-1)$ such that $2n=q^i+q^j$. 
\end{theorem}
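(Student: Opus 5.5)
The plan is to specialize Theorem \ref{thm:one} to $m=2$, $p=2$, $t=1$, and then rewrite the set $T$ explicitly. Theorem \ref{thm:one} is stated earlier but its proof is assembled from the first theorem and the lemma. So I would in effect prove the $m=2$ case prime by prime, computing $\text{ord}_q(g(2,n))$ for every prime $q$.

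\textbf{Hypothesis check and the prime $2$.} Every prime factor $q$ of $\binom{2n}{2}=n(2n-1)$ is either $2$ or odd, and any odd $q$ satisfies $q\equiv 1 \pmod 2$. So the hypothesis of Theorem \ref{thm:one} holds for every $n>1$. The prime $2$ is handled by the first theorem with $m=2^1$: $\text{ord}_2(g(2,n))$ is $1$ if $n$ is a power of $2$ and $0$ otherwise.

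\textbf{Odd primes $q$.} Since $q\equiv 1\pmod 2$, Eq.~(\ref{eqn:Q}) from \cite{McTague2014} (equivalently the \cite{McTague2017} extension with $m=2$) gives $\text{ord}_q(g(2,n))=1$ if $\alpha_q(2n)=2$, i.e. $2n=q^i+q^j$, and $0$ otherwise. It remains to check that every odd prime with $2n=q^i+q^j$ divides $n(2n-1)$; this is what lets us restrict $T$ to prime factors of $n(2n-1)$. The lemma with $m=2<q$ does this directly, since $1,2$ are coprime to odd $q$. Concretely, if $i,j>0$ then $q\mid 2n$, hence $q\mid n$. If exactly one exponent is $0$, then $2n-1=q^{j}$ with $j>0$ (because $n>1$), so $q\mid 2n-1$. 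Both exponents cannot be $0$, since that would give $n=1$. Conversely, $g(2,n)$ divides $\binom{2n}{2}$, so no primes outside $n(2n-1)$ can occur.

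\textbf{Assembling.} Multiplying the prime-power contributions gives the stated formula, and $g(2,n)$ is squarefree. The only delicate point is the identification of $T$: Theorem \ref{thm:one}'s $T$ consists of primes $q\neq 2$ dividing $\binom{2n}{2}=n(2n-1)$ with $\alpha_q(2n)=2$, and this is exactly the set of odd prime factors of $n(2n-1)$ with $2n=q^i+q^j$. Otherwise the argument is routine bookkeeping.
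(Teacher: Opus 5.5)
Your proposal is correct and takes the paper's route. The paper obtains this theorem in one sentence, by specializing Theorem~\ref{thm:one} to $m=2$ and noting that every odd prime is $\equiv 1 \pmod 2$. Your prime-by-prime argument, using the first theorem for $q=2$, Eq.~(\ref{eqn:Q}) and the lemma for odd $q$, and $g(2,n)\mid\binom{2n}{2}=n(2n-1)$, is exactly the content of that specialization, with the identification of $T$ spelled out more carefully than in the paper.
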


The values of $g(2,n)$ are listed in OEIS \cite{oeis} sequence \oeis{A265388}. 
The values of $n$ such that $g(2,n) =1$ are listed in OEIS sequence \oeis{A265401}. From Theorem \ref{thm:m=2} these are exactly the numbers $n$ such that $n$ is not a power of $2$ and all odd prime factors $q$ of $n(2n-1)$ satisfies $\alpha_q(2n) = 2$.

The squarefree kernel of $n$ (also known as the radical) is defined as the product of all distinct prime factors of $n$ and is denoted as $\mbox{rad}(n)$.

\begin{corollary} \label{cor:divide}
If $n>1$ and $\mbox{rad}(2n-1)$ divides $g(2,n)$ then $2n-1$ is a prime power.
\end{corollary}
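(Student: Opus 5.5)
The plan is to combine Theorem~\ref{thm:m=2} with a direct analysis of the equation $2n=q^i+q^j$ for an odd prime $q$ dividing $2n-1$. Since $2n-1$ is odd, every prime factor $q$ of $2n-1$ is odd. Moreover, $g(2,n)$ is squarefree with odd part $\prod_{q\in T}q$. So the hypothesis that $\mbox{rad}(2n-1)$ divides $g(2,n)$ says exactly that every prime factor $q$ of $2n-1$ lies in $T$, i.e.\ $2n = q^i+q^j$ for some $i\le j$. (Note $n>1$ gives $2n-1\ge 3$, so there is at least one such $q$.)

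Next I would pin down the exponents. Reducing $2n=q^i+q^j$ modulo $q$, and using $q\mid 2n-1$, gives $2n\equiv 1\pmod q$. If $i\ge 1$ then $2n\equiv 0$, which is a contradiction. So $i=0$, and then $2n\equiv 1+q^j\pmod q$. If $j=0$ we would get $2n=2$, i.e.\ $n=1$, which is excluded. Hence $j\ge1$, and in fact $2n = 1+q^j$, so $2n-1 = q^j$.

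This already finishes the argument: taking any prime factor $q$ of $2n-1$ forces $2n-1=q^j$, a prime power. (Consistently, $2n-1$ then has only the single prime factor $q$.) I expect no real obstacle here. The only point needing care is reading Theorem~\ref{thm:m=2} correctly: membership in $T$ requires $q\mid n(2n-1)$, which holds for prime factors of $2n-1$. So divisibility of $g(2,n)$ by $q$ is equivalent to $\alpha_q(2n)=2$, and the case analysis on $i$ above is then routine.
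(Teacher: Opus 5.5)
Your proof is correct and is essentially the paper's argument: a prime $q\mid 2n-1$ lying in $T$ must have $i=0$ because $q\nmid 2n$, which forces $2n-1=q^j$. The paper argues by contradiction using two distinct prime factors of $2n-1$, while you argue directly from a single one (after noting $2n-1\ge 3$), which is slightly cleaner but is the same route.
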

\begin{proof}
If $2n-1$ is not a prime power, let $q,r>2$ be two distinct prime factors of $2n-1$. Note that $q$ and $r$ are not prime factors of $n$.
Suppose $2n = q^i+q^j$ with $i\leq j$. Since $q$ is not a prime factor of $n$ and $2n=q^i(q^{j-i}+1)$ this implies $i=0$, i.e. $2n-1=q^j$. Similarly $2n-1=r^l$. This is impossible and thus either $q$ or $r$ is not in $T$ and $\mbox{rad}(2n-1)$ does not divide $g(2,n)$.
\end{proof}

The values of $n$ such that $g(2,n)=n$ are listed in OEIS sequence \oeis{A265402}. 

\begin{corollary} \label{cor:one}
For prime $p$, $g(2,p) = p$ if and only if $2p-1$ is not a prime power.
\end{corollary}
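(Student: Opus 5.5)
The plan is to read everything off Theorem \ref{thm:m=2}, treating $p=2$ separately and, for odd $p$, identifying the set $T$ exactly, reusing the argument from Corollary \ref{cor:divide}.

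\emph{The case $p=2$.} Here $2p-1=3$ is a prime power, so we must show $g(2,2)\neq 2$. The only binomial coefficient in the definition is $\binom{4}{2}=6$, so $g(2,2)=6\neq 2$. Alternatively, Theorem \ref{thm:m=2} gives $g(2,2)=2\cdot 3$, since $3\mid 2\cdot 3$ and $4=3^0+3^1$.

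\emph{The case $p$ odd.} Now $n=p$ is not a power of $2$, so Theorem \ref{thm:m=2} gives $g(2,p)=\prod_{q\in T}q$. Here $T$ consists of the odd primes $q$ dividing $p(2p-1)$ with $2p=q^i+q^j$. Since $\gcd(p,2p-1)=1$, the candidates are $q=p$ and the prime divisors of $2p-1$; the latter are distinct from $p$.

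First, $p\in T$, because $2p=p^1+p^1$.

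Next, let $q\mid 2p-1$ be prime and suppose $2p=q^i+q^j$ with $i\le j$. Write $2p=q^i(1+q^{j-i})$. Since $q\nmid 2p$, we must have $i=0$, so $2p-1=q^j$. Conversely, if $2p-1=q^j$ then $2p=q^0+q^j$, so $q\in T$. Hence a prime divisor $q$ of $2p-1$ lies in $T$ exactly when $2p-1$ is a power of $q$.

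It follows that $T=\{p\}$ when $2p-1$ is not a prime power, giving $g(2,p)=p$. When $2p-1=q^j$ with $j\ge 1$, we get $T=\{p,q\}$ with $q\neq p$, so $g(2,p)=pq\neq p$. Note that $2p-1\ge 5>1$, so $2p-1=1$ cannot occur. This proves both directions of the equivalence.

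The only delicate step is the characterization of which prime divisors of $2p-1$ lie in $T$. It relies on $q\nmid 2p$, which holds because $q$ is odd and $q\neq p$. Everything else is immediate from Theorem \ref{thm:m=2}.
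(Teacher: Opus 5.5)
Your proof is correct and follows essentially the same route as the paper. You apply Theorem \ref{thm:m=2}, dispose of $p=2$ via $g(2,2)=6$, and for odd $p$ show that $p\in T$, while a prime divisor $q$ of $2p-1$ lies in $T$ exactly when $2p-1=q^j$; your exact description of $T$ is just a tidier packaging of the paper's contradiction using two distinct prime factors $q,r$ of $2p-1$.
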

\begin{proof}
Theorem \ref{thm:m=2} implies that $g(2,n)$ is odd if $n$ is not a power of $2$. Since $g(2,2) = 6$, Corollary \ref{cor:one} is true for $p=2$. Let us assume that $p$ is an odd prime. Since $g(2,p)$ is squarefree, this implies that $g(2,p)$ is an odd squarefree number.
Suppose that $2p-1$ is not a prime power.
Then $p\in T$ since $2p=p+p$. Let $q,r>2$ be two distinct prime factors of $2p-1$. Then clearly $q\neq p$ and $r\neq p$.
If $2p = q^i+q^j$ with $i\leq j$, then $2p=q^i(q^{j-i}+1)$ and thus $i=0$, i.e. $2p-1=q^j$ contradicting the fact that $r$ is a prime factor of $2p-1$. 
Therefore $2p\neq q^i+q^j$, $T = \{p\}$ and thus $g(2,p) = p$. This argument also shows that if $2p-1=q^t$ is a prime power, then $q\in T$ and $pq$ divides $g(2,p)$, i.e. $g(2,p)\neq p$. 
\end{proof}

\begin{conjecture} \label{conj:one}
$g(2,n)=n$ if and only if $n$ is prime and $2n-1$ is not a prime power.
\end{conjecture}

The following result gives a necessary condition for numbers $n$ such that $g(2,n) = 2n-1$ (OEIS sequence \oeis{A265403}).
\begin{corollary}
If $n>1$ and $g(2,n)=2n-1$ then $2n-1$ is prime.
\end{corollary}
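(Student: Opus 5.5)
The plan is to combine Theorem \ref{thm:m=2} with Corollary \ref{cor:divide}. Suppose $n>1$ and $g(2,n)=2n-1$.

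First, note that $2n-1$ is odd and at least $3$, so $g(2,n)$ is odd. By Theorem \ref{thm:m=2}, $g(2,n)$ is the product of the primes in $T$, with an extra factor $2$ when $n$ is a power of $2$. Hence $g(2,n)$ is squarefree. Since $g(2,n)=2n-1$, the number $2n-1$ is squarefree, so $\mbox{rad}(2n-1)=2n-1$.

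Next, $\mbox{rad}(2n-1)=2n-1=g(2,n)$ trivially divides $g(2,n)$. Corollary \ref{cor:divide} then says that $2n-1$ is a prime power, say $2n-1=q^t$ with $t\geq 1$.

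Finally, squarefreeness forces $t=1$, so $2n-1=q$ is prime. The only step needing care is the squarefreeness of $g(2,n)$, and this follows directly from the product formula in Theorem \ref{thm:m=2}; no real obstacle is expected.
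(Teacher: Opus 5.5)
Your proof is correct and follows the paper's argument. You invoke Corollary \ref{cor:divide} to get that $2n-1$ is a prime power, then use the squarefreeness of $g(2,n)$ from Theorem \ref{thm:m=2} to force the exponent to be $1$; squarefreeness is not actually needed to check the hypothesis of Corollary \ref{cor:divide}, since $\mbox{rad}(2n-1)$ always divides $2n-1=g(2,n)$, but using it there does no harm.
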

\begin{proof}
By Corollary \ref{cor:divide} $2n-1$ is a prime power. Since $g(2,n)$ is squarefree, the conclusion follows.
\end{proof}

A consequence of Theorem \ref{thm:m=2} is that $g(2,n)$ is a divisor of the squarefree kernel of $n(2n-1)$. This provides an upper bound of $g(2,n)$.
The numbers $n$ such that $g(2,n)$ achieves this upper bound, i.e. $g(2,n)=\mbox{rad}(n(2n-1))$, 
is listed in OEIS sequence \oeis{A395746}. The following result gives a necessary condition for terms of this sequence
and is a direct consequence of Corollary \ref{cor:divide}

\begin{corollary}
If $n>1$ and $g(2,n)=\mbox{rad}(n(2n-1))$ then $2n-1$ is a prime power.
\end{corollary}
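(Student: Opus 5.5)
The plan is to reduce the statement directly to Corollary \ref{cor:divide}. Suppose $n>1$ and $g(2,n)=\mbox{rad}(n(2n-1))$. We only need to show that $\mbox{rad}(2n-1)$ divides $g(2,n)$; Corollary \ref{cor:divide} then gives that $2n-1$ is a prime power.

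\emph{Divisibility step.} Every prime factor of $2n-1$ is a prime factor of $n(2n-1)$. Hence $\mbox{rad}(2n-1)$ divides $\mbox{rad}(n(2n-1))$, which equals $g(2,n)$ by hypothesis. Since $\gcd(n,2n-1)=1$, we in fact have $\mbox{rad}(n(2n-1))=\mbox{rad}(n)\,\mbox{rad}(2n-1)$, but only the divisibility is needed.

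\emph{Conclusion.} Applying Corollary \ref{cor:divide} with this divisibility yields that $2n-1$ is a prime power.

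There is no real obstacle here. The only point to check is that Corollary \ref{cor:divide} applies with no extra hypotheses beyond $n>1$ and the divisibility, and it does. The claim follows by this one-line reduction, consistent with the paper's remark that it is a direct consequence of Corollary \ref{cor:divide}.
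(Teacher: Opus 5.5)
Your proposal is correct and is the same argument as the paper, which derives the statement as a direct consequence of Corollary \ref{cor:divide}. The hypothesis gives $\mbox{rad}(2n-1) \mid \mbox{rad}(n(2n-1)) = g(2,n)$, and that corollary then forces $2n-1$ to be a prime power.
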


The following result gives a sufficient condition when $g(2,n)=\mbox{rad}(n(2n-1))$.

\begin{corollary} \label{cor:suff}
Suppose $n>1$. If $n$ and $2n-1$  are both prime powers then $g(2,n)=\mbox{rad}(n(2n-1))$.
\end{corollary}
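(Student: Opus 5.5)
We argue directly from Theorem \ref{thm:m=2}. We have $\binom{2n}{2}=n(2n-1)$, and $\gcd(n,2n-1)=1$. Write $n=p^a$ and $2n-1=r^b$ with $a,b\geq 1$, where $p$ and $r$ are primes. Then $r$ is odd and $r\neq p$. Consequently $\mbox{rad}(n(2n-1))=pr$, and the task is to show that $g(2,n)=pr$.

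\textbf{Case $p=2$.} Here $n$ is a power of $2$, so Theorem \ref{thm:m=2} gives $g(2,n)=2\prod_{q\in T}q$. The only odd prime factor of $n(2n-1)$ is $r$. Since $2n=r^b+r^0$, the prime $r$ satisfies the condition defining $T$, so $T=\{r\}$. Hence $g(2,n)=2r=\mbox{rad}(n(2n-1))$.

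\textbf{Case $p$ odd.} Now $n$ is not a power of $2$, so $g(2,n)=\prod_{q\in T}q$, where the odd prime factors of $n(2n-1)$ are exactly $p$ and $r$.
\begin{itemize}
\item For $r$: as before, $2n=r^b+r^0$, so $r\in T$.
\item For $p$: we have $2n=2p^a=p^a+p^a$, which is a sum of two powers of $p$, so $p\in T$.
\end{itemize}
Thus $T=\{p,r\}$ and $g(2,n)=pr=\mbox{rad}(n(2n-1))$.

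The argument is routine once Theorem \ref{thm:m=2} is available. The only points needing care are the coprimality of $n$ and $2n-1$, which guarantees $p\neq r$, and checking that the representation $2n=q^i+q^j$ allows the exponent $0$ and allows $i=j$. Both are consistent with how the paper uses these representations in the proofs of Corollaries \ref{cor:divide} and \ref{cor:one}.
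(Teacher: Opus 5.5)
Your proof is correct. For odd $p$ it coincides with the paper's argument: both apply Theorem~\ref{thm:m=2} and use the representations $2n=p^a+p^a$ and $2n=r^b+r^0$ to get $T=\{p,r\}$.

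The difference is the case $p=2$. The paper rules it out by asserting that Mih\u{a}ilescu's theorem makes $p=2$ impossible. That is only true for $b\geq 2$. The equation $2^{a+1}-r^b=1$ does hold with $b=1$ whenever $2n-1$ is a Mersenne prime, e.g.\ $n=2,4,16,64$. The paper itself uses $g(2,2)=6$ in the proof of Corollary~\ref{cor:one}. Your separate case uses the power-of-$2$ branch of Theorem~\ref{thm:m=2} to get $g(2,n)=2r$, which covers exactly these $n$. So your argument fills a real gap in the paper's proof, and it shows that Mih\u{a}ilescu's theorem is not needed for this corollary at all.
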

\begin{proof}
Suppose $n=p^i$ and $2n-1=q^j$ are prime powers. Then $q>2$. By Mih\u{a}ilescu's theorem \cite{Mihailescu2004} (also known as Catalan's conjecture), $p=2$ is not possible and thus $p>2$.
$2n = p^i+p^i = q^j+q^0$ so both $p$ and $q$ are in $T$ and thus $g(2,n)= pq =\mbox{rad}(n(2n-1))$.
\end{proof}

We conjecture that the conditions in Corollary \ref{cor:suff} is also necessary except for the special case $n=15$.
\begin{conjecture}
Suppose $n>1$ and $n\neq 15$. Then $g(2,n) = \mbox{rad}(n(2n-1))$ if and only if $n$ and $(2n-1)$ are both prime powers.
\end{conjecture}

\subsection{A characterization of Mersenne primes and exponents in terms of $g(2,n)$}
Recall that if $2^j-1$ is prime, then $j$ is called a Mersenne exponent (which must necessarily be prime) and $2^j-1$ is called a Mersenne prime. It is not known whether there are a finite or an infinite number of Mersenne primes and the search for them is extremely computationally intensive (see \url{https://www.mersenne.org/}). As of this writing only 52 Mersenne primes are known with the largest known Mersenne prime being $2^{136279841}-1$.  The following result gives an interesting characterization of Mersenne primes in terms of $g(2,n)$ and shows that $g(2,2^i)$ is either $2$ or twice a Mersenne prime and all the Mersenne primes can be written in the form $\frac{g(2,2^i)}{2}$.

\begin{theorem}
Assume $i>0$. Then
\begin{eqnarray*}
g(2,2^{i}) \neq 2  &\Leftrightarrow &  i+1 \text{ is a Mersenne exponent} \\ &\Leftrightarrow&   \frac{g(2,2^{i})}{2} = 2^{i+1}-1 \text{ is a Mersenne prime.}
\end{eqnarray*}
\end{theorem}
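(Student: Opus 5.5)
Apply Theorem \ref{thm:m=2} with $n=2^i$. Since $n$ is a power of $2$, we have $g(2,2^i)=2\prod_{q\in T}q$, where $T$ is the set of odd prime factors $q$ of $n(2n-1)=2^i(2^{i+1}-1)$ with $2^{i+1}=q^a+q^b$. The odd prime factors of $2^i(2^{i+1}-1)$ are exactly the prime factors of $2^{i+1}-1$, since $2^i$ contributes none. So $T$ consists of those primes $q\mid 2^{i+1}-1$ for which $2^{i+1}$ is a sum of two powers of $q$, and $g(2,2^i)\neq 2$ if and only if $T\neq\emptyset$.

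The key step is to pin down $T$. Let $q\in T$, so $2^{i+1}=q^a+q^b$ with $a\le b$. Factor this as $2^{i+1}=q^a(q^{b-a}+1)$. Because $q$ is odd, this forces $a=0$, and hence $2^{i+1}-1=q^b$. Mih\u{a}ilescu's theorem applies here in the same way as in Corollary \ref{cor:suff}. The equation $2^{i+1}-q^b=1$ with $i+1\ge 2$ and $b\ge 2$ has only the solution $3^2-2^3=1$, which has the wrong shape, since there the power of $2$ is the one that is smaller. Alternatively, an elementary argument works. If $b\ge 2$, then $q^b\equiv 3\pmod 4$ forces $b$ odd. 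Then $2^{i+1}=q^b+1=(q+1)(q^{b-1}-q^{b-2}+\cdots+1)$, and the second factor is a sum of $b$ odd terms, so it is odd and greater than $1$, which is a contradiction. Hence $b=1$, and $q=2^{i+1}-1$ is prime. Conversely, if $2^{i+1}-1=q$ is prime, then $q$ divides $n(2n-1)$ and $2^{i+1}=q^1+q^0$, so $q\in T$. Therefore $T$ is empty unless $2^{i+1}-1$ is prime, in which case $T=\{2^{i+1}-1\}$.

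This yields all three equivalences. $g(2,2^i)\ne 2$ holds if and only if $T\ne\emptyset$, which holds if and only if $2^{i+1}-1$ is prime, that is, $i+1$ is a Mersenne exponent. In that case $g(2,2^i)/2=\prod_{q\in T}q=2^{i+1}-1$, which is a Mersenne prime. The reverse implication from the last condition back to the first is immediate: if $g(2,2^i)/2$ is prime, then it is not $1$.

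The only real obstacle is excluding $b\ge 2$, that is, showing that $2^{i+1}-1$ cannot be a proper power of a prime. I would present it with the elementary mod $4$ and factorization argument above, and cite Mih\u{a}ilescu's theorem as an alternative.
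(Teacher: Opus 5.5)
Your proof is correct and follows the paper's outline. You apply Theorem~\ref{thm:m=2} with $n=2^i$, factor $2^{i+1}=q^a(q^{b-a}+1)$ to force $a=0$, and then show that $2^{i+1}-1=q^b$ forces $b=1$.

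The difference is in that last step, which is the only nontrivial one. The paper simply cites Mih\u{a}ilescu's theorem, which is your stated alternative. Your main argument is elementary instead. Since $i\ge 1$, you get $q^b\equiv 3\pmod{4}$, so $b$ is odd. For odd $b\ge 3$, the cofactor $(q^b+1)/(q+1)$ is then an odd divisor of $2^{i+1}$ exceeding $1$, which is impossible.

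Your route is self-contained and shows the deep theorem is not needed here: with one base equal to $2$, the relevant case of Catalan's equation is an elementary exercise. The paper's route is a one-line citation but rests on a far heavier result.

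You are also a bit more careful than the paper in two places:
\begin{itemize}
\item you keep the condition $q\mid n(2n-1)$ in the definition of $T$;
\item you check explicitly that $2^{i+1}-1\in T$ when it is prime, so that $T=\{2^{i+1}-1\}$, which the paper just asserts.
\end{itemize}

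The one thing you leave tacit is $b\neq 0$ before concluding $b=1$. The paper states this explicitly. It follows at once from $q\mid 2^{i+1}-1$ (or from $i>0$), so it is not a gap.
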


\begin{proof}
By Theorem \ref{thm:m=2} $g(2,2^{i})\neq 2$ if and only if $2^{i+1}= p^k+p^m$ for some odd prime $p$ and $k\leq m$. Since $p^k+p^m = p^k(p^{m-k}+1)$, $p^k$ divides $2^{i+1}$, and this implies that $k=0$, i.e. $2^{i+1} = p^m+1$. By Mih\u{a}ilescu's theorem 
the only possible solution is when $m\leq 1$. Since $i>0$, $m=0$ is not possible and we are left with $2^{i+1}=p+1$, i.e. $i+1$ is a Mersenne exponent and $p$ is a Mersenne prime. Since $T = \{p\}$, the corresponding value   
of $g(2,2^{i})$ is $2p$ and the proof is complete.
\end{proof}

\section{Conclusions}
We study the GCD of the set $\left\{\binom{mn}{mk}:1\leq k < n\right\}$ and show that under certain conditions this is a squarefree number that can be efficiently computed. We also show a characterization of Mersenne exponents and Mersenne primes in terms of the GCD of $\left\{\binom{2^i}{2k}:1\leq k < 2^{i-1}\right\}$.


\begin{thebibliography}{10}

\bibitem{Mendelsohn:1971}
N.~Mendelsohn, ``Divisors of binomial coefficients,'' {\em The American
  Mathematical Monthly}, vol.~78, pp.~201--202, 1971.

\bibitem{Albree1972}
J.~Albree, ``The gcd of certain binomial coefficients,'' {\em Mathematics
  Magazine}, vol.~45, no.~5, pp.~259--261, 1972.

\bibitem{Gould1972}
H.~W. Gould, ``A new greatest common divisor property of the binomial
  coefficients,'' {\em The Fibonacci Quarterly}, vol.~10, no.~6, pp.~579--584,
  1972.

\bibitem{Straus1973}
E.~G. Straus, ``On the greatest common divisor of some binomial coefficients,''
  {\em The Fibonacci Quarterly}, vol.~11, no.~1, pp.~25--26, 1973.

\bibitem{Joris1985}
H.~Joris, C.~Oestreicher, and J.~Steinig, ``The greatest common divisor of
  certain sets of binomial coefficients,'' {\em Journal of Number Theory},
  vol.~21, no.~1, pp.~101--119, 1985.

\bibitem{Hong2016}
S.~Hong, ``The greatest common divisor of certain binomial coefficients,'' {\em
  Comptes Rendus. Mathématique}, vol.~354, no.~8, pp.~756--761, 2016.

\bibitem{Xiao:2022}
J.~Xiao, P.~Yuan, and X.~Lin, ``The greatest common divisor of certain set of
  binomial coefficients,'' {\em Mathematical Theory and Applications}, vol.~42,
  no.~1, pp.~85--91, 2022.

\bibitem{Chung:2025}
C.-L. Chung, T.-C. Yang, and K.~Zhou, ``The greatest common divisor of sets of
  binomial coefficients with restrictions,'' {\em Contemporary Mathematics},
  pp.~971--985.

\bibitem{ram:1909}
B.~Ram, ``Common factors of $n!/m!(n-m)!$,'' {\em J. Indian Math. Club (Madras)},
  vol.~1, pp.~39--43, 1909.

\bibitem{McTague2014}
C.~McTague, ``The {C}ayley plane and string bordism,'' {\em Geometry \&
  Topology}, vol.~18, no.~4, pp.~2045--2078, 2014.

\bibitem{McTague2017}
C.~McTague, ``On the greatest common divisor of binomial coefficients,'' {\em
  The American Mathematical Monthly}, vol.~124, no.~4, p.~353, 2017.

\bibitem{kummer:1852}
E.~Kummer, ``\"{U}ber die ergänzungss\"{a}tze zu den allgemeinen
  reciprocit\"{a}tsgesetzen,'' {\em Journal f\"{u}r die reine und angewandte
  Mathematik}, vol.~44, pp.~93--146, 1852.

\bibitem{oeis}
{The OEIS Foundation Inc.}, ``The on-line encyclopedia of integer sequences,''
  1996-present.
\newblock Founded in 1964 by N. J. A. Sloane.

\bibitem{Mihailescu2004}
P.~Mih\u{a}ilescu, ``Primary cyclotomic units and a proof of {C}atalans
  conjecture,'' {\em Journal für die reine und angewandte Mathematik (Crelles
  Journal)}, vol.~2004, no.~572, 2004.

\end{thebibliography}
\end{document}